\newcommand{\tluste}[1]{\mbox{\mathversion{bold}$ #1 $}}
\newcommand{\mace}[1]{{{#1}}}
\newcommand{\omace}[1]{\mbox{$\overline{\mace{#1}}$}} 
\newcommand{\umace}[1]{\mbox{$\underline{\mace{#1}}$}} 
\newcommand{\imace}[1]{\mbox{$\tluste{#1}$}} 
\newcommand{\smace}[1]{\tluste{#1}{}^S} 
\def\Mid#1{#1^c}
\def\Rad#1{#1^\Delta}
\newcommand{\onum}[1]{\mbox{$\overline{{#1}}$}} 
\newcommand{\unum}[1]{\mbox{$\underline{{#1}}$}}
\newcommand{\ivr}[1]{\mbox{$\tluste{#1}$}} 
\newcommand{\inum}[1]{\mbox{$\tluste{#1}$}} 
\newcommand{\R}[0]{{\mathbb{R}}}
\newcommand{\IR}[0]{{\mathbb{IR}}}
\newcommand{\mmid}[0]{;\,}		
\newcommand{\sseznam}[2]{{\{{#1}, \ldots, {#2}\}}}
\def\clqq{``}
\def\crqq{''}
\def\quo#1{\clqq{}#1\crqq{}}  
\DeclareMathOperator{\sgn}{sgn}	
\DeclareMathOperator{\diag}{diag}
\def\nref#1{$(\ref{#1})$}
\newtheorem{theorem}{Theorem}
\theoremstyle{definition}
\newtheorem{definition}{Definition} 
\newtheorem{example}{Example}
\begin{document}

\title{Positive Semidefiniteness and Positive Definiteness of a Linear Parametric Interval Matrix}

\author{
  Milan Hlad\'{i}k\footnote{
Charles University, Faculty  of  Mathematics  and  Physics,
Department of Applied Mathematics, 
Malostransk\'e n\'am.~25, 11800, Prague, Czech Republic, 
e-mail: \texttt{milan.hladik@matfyz.cz}
}
}
\maketitle

\begin{abstract}
We consider a symmetric matrix, the entries of which depend linearly on some parameters. The domains of the parameters are compact real intervals. We investigate the problem of checking whether for each (or some) setting of the parameters, the matrix is positive definite (or positive semidefinite). We state a characterization in the form of equivalent conditions, and also propose some computationally cheap sufficient\,/\,necessary conditions. Our results extend the classical results on positive (semi-)definiteness of interval matrices. They may be useful for checking convexity or non-convexity in global optimization methods based on branch and bound framework and using interval techniques.
\end{abstract}

\textbf{Keywords:}\textit{Interval computation, interval matrix, parametric matrix, positive semidefiniteness, positive definiteness, global optimization.}

\section{Introduction}

A commonly used deterministic approach to global optimization  \cite{Flo2000,FloPar2009,HanWal2004,HenTot2010,Kea1996,Neu2004} is based on exhaustive splitting of the search space into smaller parts (usually boxes) and applying various interval techniques to remove boxes that provably do not contain any global minimizer, to compute rigorous lower and upper bounds on the optimal value, and to prove optimality of some point within a box, among others.

An important step in this approach is convexity testing on a box.
If the objective function is identified as convex on the box, any local minimum is also global, and the search within the box becomes easier. Similarly, if the function is convex nowhere on the box and the box lies inside the feasible set, then the box can be removed as it contains no local, and hence also no global, minimum.
Convexity also plays an important role in the global optimization  $\alpha$BB method \cite{Flo2000,FloGou2009,FloPar2009,Hla2013ca,SkjWes2014}, which is based in constructing a convex underestimator of the objective function by appending an additional convex quadratic term.

Convexity of the objective function on a box is usually studied via an interval matrix enclosing all Hessian matrices of the function on the box. Since convexity of a function corresponds to positive (semi-)definiteness of its Hessian matrix, we face the problem of checking  positive (semi-)definiteness of an interval matrix.

Let us introduce some notation now. 
We use $\diag(z)$ for the diagonal matrix with entries $z_1,\dots,z_n$, and $\sgn(r)$ for the sign of $r$ ($\sgn(r)=1$ if $r\geq0$ and $\sgn(r)=-1$ otherwise). For vectors, the sign is applied entrywise.

\emph{An interval matrix} $\imace{A}$ is defined as
$$
\imace{A}:=[\umace{A},\omace{A}]=\{A\in\R^{m\times n}\mmid \umace{A}\leq A\leq\omace{A}\},
$$
where $\umace{A},\omace{A}\in\R^{m\times n}$, $\umace{A}\leq\omace{A}$, are given, and the inequality is understood entrywise. \emph{The midpoint} and \emph{the radius} of $\imace{A}$ are defined respectively as
$$
\Mid{A}:=\frac{1}{2}(\umace{A}+\omace{A}),\quad
\Rad{A}:=\frac{1}{2}(\omace{A}-\umace{A}).
$$
The set of all $m$-by-$n$ interval matrices is denoted by $\IR^{m\times n}$. Supposing that both $\Mid{A}$ and $\Rad{A}$ are symmetric, the symmetric counterpart to $\imace{A}$ is 
$$
\smace{A}:=\{A\in\imace{A}\mmid A=A^T\}.
$$


A symmetric interval matrix $\smace{A}\in\IR^{n\times n}$ is \emph{strongly positive definite (positive semidefinite)} if $A$ is positive definite (positive semidefinite) for each $A\in\smace{A}$.
Next, $\smace{A}$ is \emph{weakly positive definite (positive semidefinite)} if $A$ is positive definite (positive semidefinite) for some $A\in\smace{A}$.
Eventually, $\imace{A}\in\IR^{n\times n}$ is \emph{regular} if every $A\in\imace{A}$ is nonsingular.

The classical results characterizing strong positive semidefiniteness and positive definiteness are stated below; see Rohn~\cite{Roh1994b,Roh2012a,Roh2012b}, and  Bia{\l}as and  Garloff~\cite{BiaGar1984}. Suppose that $\imace{A}\in\IR^{n\times n}$  is given with $\Mid{A}$ and $\Rad{A}$ symmetric.

\begin{theorem}\label{thmPSD}
The following are equivalent:
\begin{enumerate}[(1)]
\item
$\smace{A}$ is positive semidefinite,
\item
$\Mid{A}-\diag(z)\Rad{A}\diag(z)$ is positive semidefinite for each $z\in\{\pm1\}^n$,
\item
$x^T\Mid{A}x-|x|^T\Rad{A}|x|\geq0$ for each $x\in\R^n$.
\end{enumerate}
\end{theorem}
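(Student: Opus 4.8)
The plan is to prove the three conditions equivalent by establishing the cyclic chain $(1)\Rightarrow(2)\Rightarrow(3)\Rightarrow(1)$. Throughout I would recall that a symmetric matrix $M$ is positive semidefinite exactly when $x^TMx\geq0$ for every $x\in\R^n$, and that every $A\in\smace{A}$ satisfies the entrywise bound $|A-\Mid{A}|\leq\Rad{A}$, where $\Rad{A}\geq0$ componentwise.

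For $(1)\Rightarrow(2)$, the key observation is that each candidate matrix $\Mid{A}-\diag(z)\Rad{A}\diag(z)$ is itself a member of $\smace{A}$. Indeed its $(i,j)$ entry is $\Mid{A}_{ij}-z_iz_j\Rad{A}_{ij}$, which differs from $\Mid{A}_{ij}$ by $z_iz_j\Rad{A}_{ij}$; since $z_iz_j\in\{\pm1\}$ and $\Rad{A}_{ij}\geq0$, this deviation has absolute value $\Rad{A}_{ij}$, so the entry lies in $[\umace{A}_{ij},\omace{A}_{ij}]$. Because $\Mid{A}$ and $\Rad{A}$ are symmetric and the congruence $\diag(z)\,\Rad{A}\,\diag(z)$ preserves symmetry, the matrix is symmetric, hence lies in $\smace{A}$ and is positive semidefinite by $(1)$.

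The heart of the argument is $(2)\Rightarrow(3)$, where I would fix an arbitrary $x\in\R^n$ and choose the sign vector $z:=\sgn(x)$. Then $\diag(z)x=|x|$ entrywise, and expanding $x^T\big(\Mid{A}-\diag(z)\Rad{A}\diag(z)\big)x$ gives exactly $x^T\Mid{A}x-|x|^T\Rad{A}|x|$; condition $(2)$ forces this to be nonnegative, which is precisely $(3)$. This substitution, which converts a fixed sign pattern into absolute values, is the decisive trick.

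Finally, $(3)\Rightarrow(1)$ follows from a triangle-inequality estimate: for arbitrary $A\in\smace{A}$ and $x\in\R^n$, write $x^TAx=x^T\Mid{A}x+x^T(A-\Mid{A})x$ and bound $x^T(A-\Mid{A})x\geq-\sum_{i,j}|x_i|\,\Rad{A}_{ij}\,|x_j|=-|x|^T\Rad{A}|x|$, using $|A-\Mid{A}|\leq\Rad{A}$. Combining these gives $x^TAx\geq x^T\Mid{A}x-|x|^T\Rad{A}|x|\geq0$ by $(3)$, so every $A\in\smace{A}$ is positive semidefinite. I expect the main obstacle to be purely in the bookkeeping: ensuring that the symmetry hypotheses on $\Mid{A}$ and $\Rad{A}$ are invoked correctly so that the vertex matrices genuinely belong to $\smace{A}$, and handling the sign manipulations cleanly, rather than any conceptual difficulty.
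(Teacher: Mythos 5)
Your proof is correct and follows essentially the same route as the paper: the paper states Theorem~\ref{thmPSD} as a classical result without proof, but its proof of the parametric generalization (Theorem~\ref{thmPsdPar}) uses exactly your cyclic chain --- vertex matrices for $(1)\Rightarrow(2)$, a sign-vector substitution ($z=\sgn(x)$, giving $\diag(z)x=|x|$) for $(2)\Rightarrow(3)$, and the triangle-inequality bound $|x^T(A-\Mid{A})x|\leq|x|^T\Rad{A}|x|$ for $(3)\Rightarrow(1)$. The one step your setting requires beyond the parametric template, namely checking that $\Mid{A}-\diag(z)\Rad{A}\diag(z)$ actually lies in $\smace{A}$, you handle correctly via the entrywise and symmetry verifications.
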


\begin{theorem}\label{thmPDint}
The following are equivalent:
\begin{enumerate}[(1)]
\item
$\smace{A}$ is positive definite,
\item
$\Mid{A}-\diag(z)\Rad{A}\diag(z)$ is positive definite for each $z\in\{\pm1\}^n$,
\item
$x^T\Mid{A}x-|x|^T\Rad{A}|x|>0$ for each $0\not=x\in\R^n$,
\item\label{thmPDint4}
$\Mid{A}$ is positive definite and $\imace{A}$ is regular.
\end{enumerate}
\end{theorem}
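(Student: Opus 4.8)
The plan is to run the cycle $(1)\Rightarrow(2)\Rightarrow(3)\Rightarrow(1)$ exactly as in the semidefinite case of Theorem~\ref{thmPSD}, and then to attach $(4)$ by proving $(3)\Rightarrow(4)\Rightarrow(1)$. The engine for the first three conditions is the elementary minimization identity
\begin{equation*}
\min_{A\in\smace{A}} x^T A x \;=\; x^T\Mid{A}x - |x|^T\Rad{A}|x|,
\end{equation*}
valid for every $x\in\R^n$, where the minimum is attained at the vertex matrix $A_{\sgn(x)}:=\Mid{A}-\diag(\sgn(x))\Rad{A}\diag(\sgn(x))\in\smace{A}$; this follows by minimizing $\sum_{i,j}A_{ij}x_ix_j$ entrywise over the symmetric members of $\smace{A}$ and by observing that $\diag(\sgn(x))x=|x|$. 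Granting this, $(1)\Rightarrow(2)$ is immediate because each $\Mid{A}-\diag(z)\Rad{A}\diag(z)$ lies in $\smace{A}$; for $(2)\Rightarrow(3)$ one fixes $0\neq x$, takes $z=\sgn(x)$, and computes $x^T(\Mid{A}-\diag(z)\Rad{A}\diag(z))x=x^T\Mid{A}x-|x|^T\Rad{A}|x|>0$; and for $(3)\Rightarrow(1)$ any $A\in\smace{A}$ with $0\neq x$ satisfies $x^TAx\geq\min_{A'\in\smace{A}}x^TA'x=x^T\Mid{A}x-|x|^T\Rad{A}|x|>0$.

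Next I would prove $(3)\Rightarrow(4)$. Positive definiteness of $\Mid{A}$ is read off directly, since $(3)$ gives $x^T\Mid{A}x>|x|^T\Rad{A}|x|\geq0$ for all $0\neq x$. For regularity of $\imace{A}$ I would argue by contraposition using the standard singularity characterization: if some $A\in\imace{A}$ were singular, say $Ax=0$ with $x\neq0$, then writing $\Delta=\Mid{A}-A$ with $|\Delta|\leq\Rad{A}$ entrywise yields $|\Mid{A}x|=|\Delta x|\leq\Rad{A}|x|$. Taking the inner product with $|x|\geq0$ gives $x^T\Mid{A}x\leq|x|^T|\Mid{A}x|\leq|x|^T\Rad{A}|x|$, contradicting $(3)$.

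For $(4)\Rightarrow(1)$ I would use a connectedness argument on the convex set $\smace{A}$. Suppose $\Mid{A}$ is positive definite and $\imace{A}$ is regular, yet some $A_0\in\smace{A}$ fails to be positive definite. Along the segment $A(t)=(1-t)\Mid{A}+tA_0\in\smace{A}$, $t\in[0,1]$, the smallest eigenvalue $\lambda_{\min}(A(t))$ depends continuously on $t$, is positive at $t=0$, and is nonpositive at $t=1$; hence it vanishes at some $t^\ast$, making $A(t^\ast)\in\smace{A}\subseteq\imace{A}$ singular and contradicting regularity. This closes the cycle.

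The step I expect to demand the most care is the coupling of $(4)$ with the remaining conditions, because $(1)$--$(3)$ concern only the symmetric matrices $\smace{A}$, while regularity in $(4)$ quantifies over the full, generally non-symmetric, interval matrix $\imace{A}$. The two directions bridge this gap in opposite ways: $(3)\Rightarrow(4)$ exploits that \emph{any} singular $A\in\imace{A}$, symmetric or not, forces the quadratic-form inequality to fail through a single witnessing vector, whereas $(4)\Rightarrow(1)$ relies on the observation that a breakdown of positive definiteness within the convex symmetric slice must already produce a singular \emph{symmetric} matrix, which automatically belongs to $\imace{A}$.
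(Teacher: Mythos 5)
Your proof is correct. The paper itself states Theorem~\ref{thmPDint} without proof (it is cited as a classical result of Rohn and Bia{\l}as--Garloff), but your argument coincides with the techniques the paper uses for its parametric analogues: the sign-vector vertex selection and triangle-inequality estimate in the proof of Theorem~\ref{thmPsdPar} give exactly your cycle $(1)\Rightarrow(2)\Rightarrow(3)\Rightarrow(1)$, and the eigenvalue-continuity argument in the proof of Theorem~\ref{thmPdReg} is precisely your $(4)\Rightarrow(1)$; your explicit singularity-witness step in $(3)\Rightarrow(4)$, which correctly covers the non-symmetric members of $\imace{A}$ (the point you rightly flag as delicate, since $(1)$--$(3)$ quantify only over $\smace{A}$), is the one ingredient with no counterpart in the paper and is handled soundly.
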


Checking strong  positive (semi-)definiteness is known to be a co-NP-hard problem (Kreinovich et al.\ \cite{KreLak1998}).
On the other hand, checking whether there is a positive semidefinite matrix in $\smace{A}$ is a polynomial time problem; see Jaulin and Henrion \cite{JauHen2005}.

There are other related results on positive definiteness of interval matrices. For instance, Liu \cite{Liu2009} presents a sufficient condition and applies it to stability issues, Kolev \cite{Kol2007} presents a method to determine a positive definite margin of an interval matrix, and Shao and Hou \cite{ShaHou2010} propose a necessary and sufficient criterion for a larger class of complex Hermitian interval matrices.

Positive (semi-)definiteness closely relates to matrix eigenvalues. A real symmetric matrix $A$ is positive (semi-)definite if and only if all its eigenvalues are positive (nonnegative). This relation indicates that positive (semi-)definiteness can be investigated from the perspective of eigenvalues of interval matrices. Such eigenvalues were studied, e.g., in \cite{Hla2013a,HlaDan2010,Kol2006,Kol2010,Len2014,MatPas2012,Mon2011}, and some of those results could possibly be used to check for positive (semi-)definiteness; a simple sufficient condition for strong positive definiteness appeared already in Rohn \cite{Roh1994b,Roh2012a,Roh2012b}.
This paper, however, is focused in other direction. We generalize some of the classical results to interval matrices affected by linear dependencies between the matrix entries.

\section{Linear Parametric Matrices: Positive Semidefiniteness}\label{sPsdPar}

The standard notion of an interval matrix assumes that all matrix entries vary within the given intervals independently  of other entries. This assumption is rarely satisfied in practice. To approach more closely to practical use and to model possible dependencies, consider a more general concept of a linear parametric matrix
\begin{align*}
A(p)=\sum_{k=1}^K A^{(k)}p_k,
\end{align*}
where $A^{(1)},\dots,A^{(K)}\in\R^{n\times n}$ are fixed symmetric matrices and $p_1,\dots,p_K$ are parameters varying respectively in $\inum{p}_1,\dots,\inum{p}_K\in\IR$. 

Strong and weak positive definiteness extends to parametric matrices naturally as follows.

\begin{definition}
A parametric matrix $A(p)$, $p\in\ivr{p}$, is \emph{strongly positive definite (positive semidefinite)} if $A(p)$ is positive definite (positive semidefinite) for each $p\in\ivr{p}$.
It is \emph{weakly positive definite (positive semidefinite)} if $A(p)$ is positive definite (positive semidefinite) for at least one $p\in\ivr{p}$.
\end{definition}

Linear parametric form generalizes the standard interval matrix. Evaluation $A(\ivr{p})=\sum_{k=1}^K A^{(k)}\inum{p}_k$ by interval arithmetic encloses the set of matrices $A(p)$, $p\in\ivr{p}$, in an interval matrix and reduces the problem to the standard non-parametric one. This \quo{relaxation} of parametric structure, however, overestimates the true set and may lead to loss of positive (semi-)definiteness.

\begin{example}
Let 
$$
A(p)=\begin{pmatrix}1&1\\1&1\end{pmatrix}p,\quad p\in\inum{p}=[0,1].
$$
This parametric matrix is strongly positive semidefinite, but its relaxation
$$
A(\inum{p})=\begin{pmatrix}[0,1]&[0,1]\\{}[0,1]&[0,1]\end{pmatrix}
$$
is not, as it contains, e.g., the indefinite matrix
$$
\begin{pmatrix}0&1\\1&0\end{pmatrix}.
$$

\end{example}

Linear parametric forms are also used to model linear dependencies between parameters in interval linear equation solving
\cite{Hla2012d,Pop2012,PopHla2013,ZimKra2012}.
Linear dependencies cause not only the problem to be more difficult from the computational viewpoint, but it is also hard to describe the corresponding solution set; see Mayer \cite{May2012}. 

\subsection{Strong positive semidefiniteness}

Surprisingly, characterization of strong positive semidefiniteness from Theorem~\ref{thmPSD} can be extended to parametric matrices quite straightforwardly.

\begin{theorem}\label{thmPsdPar}
The following are equivalent:
\begin{enumerate}[(1)]
\item
$A(p)$ is positive semidefinite for each $p\in\ivr{p}$,
\item
$A(p)$ is positive semidefinite for each $p$ such that $p_k\in\{\unum{p}_k,\onum{p}_k\}$ $\forall k$,
\item
$x^TA(\Mid{p})x-\sum_{k=1}^K|x^TA^{(k)}x|\cdot\Rad{p}_k\geq0$ for each $x\in\R^n$.
\end{enumerate}
\end{theorem}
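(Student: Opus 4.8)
The plan is to reduce everything to the observation that, for a fixed $x\in\R^n$, the scalar
$$
x^TA(p)x=\sum_{k=1}^K (x^TA^{(k)}x)\,p_k
$$
is an affine, in fact linear, function of the parameter vector $p$. Writing $c_k:=x^TA^{(k)}x$, positive semidefiniteness of $A(p)$ for all $p\in\ivr{p}$ is equivalent to requiring that for every $x$ the minimum of this linear function over the box $\ivr{p}$ be nonnegative. A linear function on a box attains its minimum at a vertex, and that vertex can be written down explicitly: the choice $p_k=\Mid{p}_k-\sgn(c_k)\Rad{p}_k$ yields
$$
\min_{p\in\ivr{p}} x^TA(p)x = x^TA(\Mid{p})x-\sum_{k=1}^K|x^TA^{(k)}x|\cdot\Rad{p}_k,
$$
because $\sum_k c_k\big(\Mid{p}_k-\sgn(c_k)\Rad{p}_k\big)=\sum_k c_k\Mid{p}_k-\sum_k|c_k|\Rad{p}_k$, using $c_k\,\sgn(c_k)=|c_k|$. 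This identity is the heart of the argument and is essentially the only computation needed.

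With it in hand I would prove the cycle (1)$\Rightarrow$(2)$\Rightarrow$(3)$\Rightarrow$(1). The implication (1)$\Rightarrow$(2) is immediate, since every vertex $p$ (with $p_k\in\{\unum{p}_k,\onum{p}_k\}$) belongs to $\ivr{p}$. For (2)$\Rightarrow$(3), I fix $x$ and note that the minimizing point $p^\ast$ with $p^\ast_k=\Mid{p}_k-\sgn(c_k)\Rad{p}_k$ is exactly one of the vertices, each coordinate being $\unum{p}_k$ or $\onum{p}_k$ according to the sign of $c_k$; condition (2) gives $x^TA(p^\ast)x\geq0$, which by the displayed identity is precisely the inequality in (3). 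For (3)$\Rightarrow$(1), I take an arbitrary $p\in\ivr{p}$ and an arbitrary $x$, and bound $x^TA(p)x\geq\min_{p'\in\ivr{p}}x^TA(p')x$, whose right-hand side equals the nonnegative quantity in (3); hence $x^TA(p)x\geq 0$ for all $x$, i.e.\ $A(p)$ is positive semidefinite.

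I do not expect a genuine obstacle here; the only point requiring care is the bookkeeping of quantifiers. The minimizing vertex $p^\ast$ depends on $x$, so it is essential that (2) demand positive semidefiniteness at \emph{every} vertex simultaneously, which is what permits, for each $x$ separately, invoking nonnegativity at the particular vertex realizing the minimum. This is the direct parametric analogue of Theorem~\ref{thmPSD}, where the vertex matrices $\Mid{A}-\diag(z)\Rad{A}\diag(z)$ play the role of our vertex parameter settings; specializing $A(p)$ to the independent-entry case should recover that statement verbatim.
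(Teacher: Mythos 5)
Your proposal is correct and follows essentially the same route as the paper's own proof: the identity $\min_{p\in\ivr{p}}x^TA(p)x=x^TA(\Mid{p})x-\sum_{k=1}^K|x^TA^{(k)}x|\cdot\Rad{p}_k$, realized at the vertex $p_k=\Mid{p}_k-\sgn(x^TA^{(k)}x)\Rad{p}_k$, is exactly the computation the paper performs in the implications $(2)\Rightarrow(3)$ and $(3)\Rightarrow(1)$, merely packaged as a single minimization statement. Your quantifier bookkeeping (the minimizing vertex depending on $x$) is handled correctly and matches the paper's argument.
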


\begin{proof}\mbox{}

\quo{$(1)\Rightarrow (2)$}
Obvious.

\quo{$(2)\Rightarrow (3)$}
Let $0\not=x\in\R^n$. Define $s_k:=\sgn(x^TA^{(k)}x)$  and $p_k:=\Mid{p}_k-s_k\Rad{p}_k\in\{\unum{p}_k,\onum{p}_k\}$, $k=1,\dots,K$. Now, by positive semidefiniteness of $A(p)$, we have
\begin{align*}
x^TA(\Mid{p})x-\sum_{k=1}^K|x^TA^{(k)}x|\cdot\Rad{p}_k
&=x^TA(\Mid{p})x-\sum_{k=1}^Kx^TA^{(k)}x s_k\Rad{p}_k\\
&=\sum_{k=1}^Kx^TA^{(k)}x p_k
 =x^TA(p)x\geq0.
\end{align*}

\quo{$(3)\Rightarrow (1)$}
Let $p\in\ivr{p}$ and  $x\in\R^n$. Now,
\begin{align*}
x^TA(p)x
&=\sum_{k=1}^Kx^TA^{(k)}x p_k
=x^TA(\Mid{p})x+\sum_{k=1}^Kx^TA^{(k)}x (p_k-\Mid{p}_k)\\
&\geq x^TA(\Mid{p})x-\sum_{k=1}^K|x^TA^{(k)}x|\cdot|p_k-\Mid{p}_k|\\
&\geq x^TA(\Mid{p})x-\sum_{k=1}^K|x^TA^{(k)}x|\cdot\Rad{p}_k
\geq0.
\qedhere
\end{align*}
\end{proof}

This result shows that strong positive semidefiniteness can be verified by checking positive semidefiniteness of $2^K$ real matrices. This enables us to effectively check  strong positive semidefiniteness of large matrices provided the number of parameters is small. Moreover, as stated below, the number $2^K$ can be further reduced in some cases.

\begin{theorem}\label{thmPsdParSuf}
\mbox{}
\begin{enumerate}[(1)]
\item
If $A^{(i)}$ is positive semidefinite for some $i$, then we can fix $p_i:=\unum{p}_i$ for checking strong positive semidefiniteness.
\item
If $A^{(i)}$ is negative semidefinite for some $i$, then we can fix $p_i:=\onum{p}_i$ for checking strong positive semidefiniteness.
\end{enumerate}
\end{theorem}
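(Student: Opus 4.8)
The plan is to derive this from the vertex characterization in Theorem~\ref{thmPsdPar} by showing that, under the assumption on $A^{(i)}$, half of the $2^K$ vertices are redundant. The implication from strong positive semidefiniteness to positive semidefiniteness at the reduced set of vertices is trivial, so the whole content lies in the converse, which I would establish by a monotonicity (L\"owner-order) comparison between paired vertices.

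First I would fix arbitrary endpoint values $p_k\in\{\unum{p}_k,\onum{p}_k\}$ for all $k\neq i$ and compare the two matrices obtained by completing this choice with the two endpoints of $\inum{p}_i$. Writing $A_-$ for the matrix with $p_i=\unum{p}_i$ and $A_+$ for the matrix with $p_i=\onum{p}_i$, their difference is
\begin{align*}
A_+-A_-=(\onum{p}_i-\unum{p}_i)A^{(i)}=2\Rad{p}_iA^{(i)}.
\end{align*}
For part (1), since $\Rad{p}_i\geq0$ and $A^{(i)}$ is positive semidefinite, this difference is positive semidefinite, i.e.\ $x^TA_+x\geq x^TA_-x$ for every $x\in\R^n$. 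Hence positive semidefiniteness of $A_-$ forces positive semidefiniteness of $A_+$. Consequently, if every vertex with $p_i=\unum{p}_i$ is positive semidefinite, then every paired vertex with $p_i=\onum{p}_i$ is positive semidefinite as well; all $2^K$ vertices then satisfy condition (2) of Theorem~\ref{thmPsdPar}, which yields strong positive semidefiniteness. Part (2) is entirely symmetric: when $A^{(i)}$ is negative semidefinite, the same computation makes $A_+-A_-$ negative semidefinite, so $x^TA_+x\leq x^TA_-x$, and now positive semidefiniteness at $p_i=\onum{p}_i$ implies it at $p_i=\unum{p}_i$; thus we may fix $p_i:=\onum{p}_i$ instead.

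I do not expect a genuine obstacle here: the argument is elementary once phrased through the ordering of the quadratic forms $x^TA(p)x$, and the only point requiring care is the bookkeeping, namely ensuring that the comparison is made between vertices that agree in all coordinates except the $i$-th, so that appealing to the vertex equivalence of Theorem~\ref{thmPsdPar} is legitimate. An equivalent and perhaps cleaner formulation, which I would also mention, notes directly that for fixed $p_k$ ($k\neq i$) the map $p_i\mapsto x^TA(p)x$ is affine with slope $x^TA^{(i)}x$, so definiteness of $A^{(i)}$ makes it monotone in $p_i$ and pushes the minimum of the quadratic form over $\inum{p}_i$ to the corresponding endpoint.
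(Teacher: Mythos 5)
Your proof is correct and rests on exactly the same idea as the paper's: when $p$ and $p'$ agree in all coordinates except the $i$-th, the difference $A(p)-A(p')=(p_i-p_i')A^{(i)}$ is semidefinite, and closedness of positive semidefiniteness under adding nonnegative multiples of a positive semidefinite matrix does the rest. The only cosmetic difference is that the paper applies this monotonicity directly at an arbitrary $p\in\ivr{p}$, via the decomposition $A(p)=\sum_{k\neq i}A^{(k)}p_k+A^{(i)}\unum{p}_i+A^{(i)}(p_i-\unum{p}_i)$, so it needs no appeal to the vertex characterization of Theorem~\ref{thmPsdPar}, whereas you route the same comparison through paired vertices.
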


\begin{proof}\mbox{}

(1) Let $p\in\ivr{p}$. We use the fact that positive semidefiniteness is closed under addition and nonnegative multiples. Thus, $A^{(i)}(p_i-\unum{p}_i)$ is positive definite.
If
$$
\sum_{k\not=i}A^{(k)}p_k+A^{(i)}\unum{p}_i
$$ 
is positive semidefinite for some $p_k\in\inum{p}_k$, $k\not=i$, then 
$$
\sum_{k\not=i}A^{(k)}p_k+A^{(i)}\unum{p}_i+A^{(i)}(p_i-\unum{p}_i)
=A(p)
$$ 
is positive semidefinite, too.

(2) Analogously.
\end{proof}

As long as $K$ is too large to apply Theorem~\ref{thmPsdPar}, and Theorem~\ref{thmPsdParSuf} fails to reduce the number of real matrices to be processed, the following sufficient condition may be useful.

\begin{theorem}\label{thmPsdSuff}
For each $k=1,\dots,K$, let $A^{(k)}=A^{(k)}_1-A^{(k)}_2$, where both $A^{(k)}_1,A^{(k)}_2$ are positive semidefinite. Then $A(p)$, $p\in\ivr{p}$, is strongly positive semidefinite if 
$$
\sum_{k=1}^K\left(A^{(k)}_1\unum{p}_k-A^{(k)}_2\onum{p}_k\right)
$$
is positive semidefinite.
\end{theorem}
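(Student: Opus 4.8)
The plan is to argue directly with quadratic forms and exploit monotonicity in each parameter $p_k$, in the same spirit as the implication $(3)\Rightarrow(1)$ of Theorem~\ref{thmPsdPar}. I would fix an arbitrary $x\in\R^n$ and an arbitrary $p\in\ivr{p}$, then expand the quadratic form using the decomposition $A^{(k)}=A^{(k)}_1-A^{(k)}_2$:
\begin{align*}
x^TA(p)x=\sum_{k=1}^K p_k\,x^TA^{(k)}_1x-\sum_{k=1}^K p_k\,x^TA^{(k)}_2x.
\end{align*}

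The decisive observation is that positive semidefiniteness of $A^{(k)}_1$ and $A^{(k)}_2$ gives $x^TA^{(k)}_1x\geq0$ and $x^TA^{(k)}_2x\geq0$ for every $x$. Since multiplying by a nonnegative scalar preserves the direction of a scalar inequality, from $p_k\geq\unum{p}_k$ I obtain $p_k\,x^TA^{(k)}_1x\geq\unum{p}_k\,x^TA^{(k)}_1x$, and from $p_k\leq\onum{p}_k$ I obtain $-p_k\,x^TA^{(k)}_2x\geq-\onum{p}_k\,x^TA^{(k)}_2x$. These two estimates pull in opposite directions with respect to $p_k$: the first favors the lower endpoint $\unum{p}_k$ because $A^{(k)}_1$ enters with a plus sign, whereas the second favors the upper endpoint $\onum{p}_k$ because $A^{(k)}_2$ is subtracted. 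This asymmetry is exactly what produces the mixed combination $A^{(k)}_1\unum{p}_k-A^{(k)}_2\onum{p}_k$ in the hypothesis, and matching each endpoint to the correct summand is the one point requiring care.

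Summing these term-by-term bounds over $k$ then yields
\begin{align*}
x^TA(p)x\geq\sum_{k=1}^K\bigl(\unum{p}_k\,x^TA^{(k)}_1x-\onum{p}_k\,x^TA^{(k)}_2x\bigr)
=x^T\!\left(\sum_{k=1}^K\bigl(A^{(k)}_1\unum{p}_k-A^{(k)}_2\onum{p}_k\bigr)\right)\!x.
\end{align*}
The assumed positive semidefiniteness of the matrix in parentheses makes the right-hand side nonnegative, so $x^TA(p)x\geq0$. As $x\in\R^n$ and $p\in\ivr{p}$ were arbitrary, $A(p)$ is positive semidefinite for every admissible $p$, i.e.\ strongly positive semidefinite. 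I do not expect any genuine obstacle here; the argument is a one-sided estimate that needs no recourse to the $2^K$-vertex characterization, and the only subtlety is the bookkeeping of which endpoint pairs with which half of the decomposition.
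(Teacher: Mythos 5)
Your proof is correct and is essentially the paper's argument restated at the level of quadratic forms: the paper decomposes $A(p)=\sum_{k}\bigl(A^{(k)}_1\unum{p}_k-A^{(k)}_2\onum{p}_k\bigr)+\sum_{k}\bigl(A^{(k)}_1(p_k-\unum{p}_k)+A^{(k)}_2(\onum{p}_k-p_k)\bigr)$ and invokes closedness of positive semidefiniteness under addition and nonnegative scalar multiples, which is exactly your term-by-term endpoint estimate applied to $x^T(\cdot)x$. No gap; the two presentations are interchangeable.
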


\begin{proof}
Let $p\in\ivr{p}$. By closedness of positive semidefiniteness under addition and nonnegative multiples, we have that
\begin{align*}
A(p)
&=\sum_{k=1}^K\left(A^{(k)}_1{p}_k-A^{(k)}_2{p}_k\right)\\
&=\sum_{k=1}^K\left(A^{(k)}_1\unum{p}_k-A^{(k)}_2\onum{p}_k\right)
+\sum_{k=1}^K\left(A^{(k)}_1(p_k-\unum{p}_k)
  +A^{(k)}_2(\onum{p}_k-p_k)\right)
\end{align*}
is  positive semidefinite, too.
\end{proof}

A splitting of $A^{(k)}$ into a difference between two positive semidefinite matrices can be carried out as follows. Let $A^{(k)}=Q\Lambda Q^T$ be a spectral decomposition of $A^{(k)}$. Let $\Lambda^+$ be the diagonal matrix the entries of which are the positive parts of $\Lambda$, and similarly  $\Lambda^-$ has the negative parts on the diagonal. Then $A^{(k)}=Q\Lambda Q^T=Q\Lambda^+ Q^T-Q\Lambda^- Q^T$ and both $Q\Lambda^+ Q^T$, $Q\Lambda^- Q^T$ are positive semidefinite.

\subsection{Weak positive semidefiniteness}

Concerning weak positive semidefiniteness, the problem is still solvable in polynomial time by utilizing a suitable semidefinite program \cite{GarMat2012,NesNem1994,VanBoy1996}. Let $M(p)$ be the block diagonal matrix with blocks
\begin{align*}
A(p),\ p_1-\unum{p}_1,\ \dots,\ p_K-\unum{p}_K,\
 \onum{p}_1-p_1,\ \dots,\ \onum{p}_K-p_K.  
\end{align*}
All entries of $M(p)$ depend affinely on variables $p$. Positive definiteness of $M(p)$ is equivalent to positive definiteness of $A(p)$ and feasibility of variables $p\in\ivr{p}$. Therefore, by solving this semidefinite program we check whether $A(p)$, $p\in\ivr{p}$, is weakly positive semidefinite.

Anyway, a cheap necessary condition may be useful, e.g., for nonconvexity testing in global optimization \cite{HanWal2004}.

\begin{theorem}\label{thmWeakPsdNec}
For each $k=1,\dots,K$, let $A^{(k)}=A^{(k)}_1-A^{(k)}_2$, where both $A^{(k)}_1,A^{(k)}_2$ are positive semidefinite. If $A(p)$, $p\in\ivr{p}$, is weakly positive semidefinite, then
$$
\sum_{k=1}^K\left(A^{(k)}_1\onum{p}_k-A^{(k)}_2\unum{p}_k\right)
$$
is positive semidefinite.
\end{theorem}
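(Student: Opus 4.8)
The plan is to use the hypothesis of weak positive semidefiniteness to extract a single witness: a parameter vector $p^*\in\ivr{p}$ for which $A(p^*)$ is positive semidefinite. Write the target matrix as
$$
B:=\sum_{k=1}^K\left(A^{(k)}_1\onum{p}_k-A^{(k)}_2\unum{p}_k\right).
$$
I would then show that $B$ dominates $A(p^*)$ in the positive semidefinite ordering, i.e.\ $B-A(p^*)$ is positive semidefinite, and conclude by noting that $B=A(p^*)+(B-A(p^*))$ is a sum of two positive semidefinite matrices. This mirrors, with the bounds oriented the other way, the sufficient condition of Theorem~\ref{thmPsdSuff}.

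First I would compute the difference explicitly. Since $A^{(k)}=A^{(k)}_1-A^{(k)}_2$, we have $A(p^*)=\sum_{k=1}^K(A^{(k)}_1 p^*_k-A^{(k)}_2 p^*_k)$, and subtracting term by term gives
$$
B-A(p^*)=\sum_{k=1}^K\left(A^{(k)}_1(\onum{p}_k-p^*_k)+A^{(k)}_2(p^*_k-\unum{p}_k)\right).
$$
The key observation is that every scalar coefficient appearing here is nonnegative: because $p^*_k\in[\unum{p}_k,\onum{p}_k]$ we have $\onum{p}_k-p^*_k\geq0$ and $p^*_k-\unum{p}_k\geq0$ for all $k$. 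Combining this with the hypothesis that each $A^{(k)}_1$ and each $A^{(k)}_2$ is positive semidefinite, and invoking closedness of the positive semidefinite cone under nonnegative scaling and addition (exactly as used in the proof of Theorem~\ref{thmPsdSuff}), every summand is positive semidefinite, hence so is $B-A(p^*)$.

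Finally I would assemble $B=A(p^*)+(B-A(p^*))$ and conclude, again by closedness of the cone under addition, that $B$ is positive semidefinite, which is the claim.

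There is essentially no analytic obstacle: the argument is a one-line monotonicity/comparison statement once the difference is written out. The only point demanding genuine care is the orientation of the interval bounds. The necessary condition attaches the \emph{upper} endpoint $\onum{p}_k$ to the positive part $A^{(k)}_1$ and the \emph{lower} endpoint $\unum{p}_k$ to the negative part $A^{(k)}_2$ — precisely the reverse of the choice in the sufficient condition of Theorem~\ref{thmPsdSuff}. It is exactly this choice that makes $B$ an upper bound (rather than a lower bound) for the set $\{A(p):p\in\ivr{p}\}$ in the Löwner order, and getting these endpoints the right way round is the entire content of the result.
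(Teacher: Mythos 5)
Your proof is correct and is essentially identical to the paper's own argument: the paper likewise takes a witness $p$ with $A(p)$ positive semidefinite and observes that $A(p)+\sum_{k=1}^K\bigl(A^{(k)}_1(\onum{p}_k-p_k)+A^{(k)}_2(p_k-\unum{p}_k)\bigr)$ equals the target matrix and is positive semidefinite by closedness of the cone under addition and nonnegative multiples. Your phrasing of this as the L\"owner-order comparison $B\succeq A(p^*)$ is only a cosmetic repackaging of the same computation.
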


\begin{proof}
Let $p\in\ivr{p}$ such that $A(p)$ is positive semidefinite. By closedness of positive semidefiniteness under addition and nonnegative multiples, we have that
\begin{align*}
A(p)+\sum_{k=1}^K\left(A^{(k)}_1
  (\onum{p}_k-p_k)+A^{(k)}_2(p_k-\unum{p}_k)\right)
=\sum_{k=1}^K\left(A^{(k)}_1\onum{p}_k-A^{(k)}_2\unum{p}_k\right)
\end{align*}
is  positive semidefinite, too.
\end{proof}

In view of Theorem~\ref{thmPsdParSuf}, it is easy to see that the conditions from Theorems~\ref{thmPsdSuff} and~\ref{thmWeakPsdNec} are necessary and sufficient provided for each $k=1,\dots,K$, the matrix $A^{(k)}$ is either positive or negative definite.

\section{Linear Parametric Matrices: Positive Definiteness}

In a similar fashion as in Section~\ref{sPsdPar}, we can characterize positive definiteness of parametric matrices.

\begin{theorem}\label{thmPdPar}
The following are equivalent:
\begin{enumerate}[(1)]
\item
$A(p)$, $p\in\ivr{p}$, is strongly positive definite,
\item
$A(p)$ is positive definite for each $p$ such that $p_k\in\{\unum{p}_k,\onum{p}_k\}$ $\forall k$,
\item
$x^TA(\Mid{p})x-\sum_{k=1}^K|x^TA^{(k)}x|\cdot\Rad{p}_k>0$ for each $0\not=x\in\R^n$.
\end{enumerate}
\end{theorem}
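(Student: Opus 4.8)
The plan is to follow the same three-step cycle $(1)\Rightarrow(2)\Rightarrow(3)\Rightarrow(1)$ used in the proof of Theorem~\ref{thmPsdPar}, replacing the nonstrict inequalities by strict ones and restricting attention to nonzero vectors. The implication $(1)\Rightarrow(2)$ is immediate, since the set of vertex parameters (those with $p_k\in\{\unum{p}_k,\onum{p}_k\}$ for all $k$) is a subset of $\ivr{p}$.

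For $(2)\Rightarrow(3)$, I would fix $0\neq x\in\R^n$ and mimic the semidefinite argument: set $s_k:=\sgn(x^TA^{(k)}x)$ and $p_k:=\Mid{p}_k-s_k\Rad{p}_k\in\{\unum{p}_k,\onum{p}_k\}$, so that $p$ is a vertex. The key algebraic identity $x^TA(\Mid{p})x-\sum_{k=1}^K|x^TA^{(k)}x|\cdot\Rad{p}_k=x^TA(p)x$ still holds, and now its right-hand side is strictly positive because $A(p)$ is positive definite and $x\neq0$. For $(3)\Rightarrow(1)$, I would take any $p\in\ivr{p}$ and any $0\neq x\in\R^n$ and estimate $x^TA(p)x$ from below by the same chain: writing $x^TA(p)x=x^TA(\Mid{p})x+\sum_{k=1}^Kx^TA^{(k)}x\,(p_k-\Mid{p}_k)$, apply the triangle inequality together with $|p_k-\Mid{p}_k|\leq\Rad{p}_k$ to reach the quantity $x^TA(\Mid{p})x-\sum_{k=1}^K|x^TA^{(k)}x|\cdot\Rad{p}_k$, which is strictly positive by $(3)$.

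The only point requiring slight care—and the nearest thing to an obstacle—is the propagation of strictness in $(3)\Rightarrow(1)$: the two intermediate bounds are merely nonstrict ($\geq$), so the strict positivity must be supplied by the terminal term coming from hypothesis $(3)$, using the elementary fact that $a\geq b\geq c>0$ forces $a>0$. Since $(3)$ guarantees $c>0$ for every $x\neq0$, this yields $x^TA(p)x>0$ for all nonzero $x$, i.e.\ $A(p)$ is positive definite for each $p\in\ivr{p}$. No genuinely new idea beyond the semidefinite case is needed; the proof is essentially a strict-inequality transcription of Theorem~\ref{thmPsdPar}.
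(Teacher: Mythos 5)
Your proof is correct and is exactly the argument the paper intends: the paper's proof of Theorem~\ref{thmPdPar} simply says \quo{Analogous to Theorem~\ref{thmPsdPar}}, and your strict-inequality transcription of that cycle $(1)\Rightarrow(2)\Rightarrow(3)\Rightarrow(1)$, including the observation that strictness in $(3)\Rightarrow(1)$ is supplied by the terminal term of the chain, is precisely the intended adaptation. Nothing is missing.
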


\begin{proof}
Analogous to Theorem~\ref{thmPsdPar}.
\end{proof}

\begin{theorem}\label{thmPdParSuf}
\mbox{}
\begin{enumerate}[(1)]
\item
If $A^{(i)}$ is positive semidefinite for some $i$, then we can fix $p_i:=\unum{p}_i$ for checking strong positive definiteness.
\item
If $A^{(i)}$ is negative semidefinite for some $i$, then we can fix $p_i:=\onum{p}_i$ for checking strong positive definiteness.
\end{enumerate}
\end{theorem}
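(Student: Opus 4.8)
The plan is to mirror the proof of Theorem~\ref{thmPsdParSuf} almost verbatim, replacing the closure of positive semidefiniteness under addition by the stronger elementary fact that the sum of a positive definite matrix and a positive semidefinite matrix is again positive definite. First I would unwind the phrase \emph{we can fix $p_i:=\unum{p}_i$}: it asserts that $A(p)$ is positive definite for every $p\in\ivr{p}$ if and only if $A(p)$ is positive definite for every $p\in\ivr{p}$ with $p_i=\unum{p}_i$. The forward implication is trivial (restriction of the domain), so only the reverse implication carries content, and that is where the semidefiniteness hypothesis on $A^{(i)}$ enters.

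For part (1), I would fix an arbitrary $p\in\ivr{p}$ and let $p'$ be obtained from $p$ by resetting its $i$-th coordinate to $\unum{p}_i$. A direct computation gives $A(p)-A(p')=A^{(i)}(p_i-\unum{p}_i)$. Since $A^{(i)}$ is positive semidefinite and $p_i-\unum{p}_i\geq0$, this difference is positive semidefinite. Assuming the reduced condition, $A(p')$ is positive definite, so the decomposition $A(p)=A(p')+A^{(i)}(p_i-\unum{p}_i)$ exhibits $A(p)$ as a positive definite matrix plus a positive semidefinite one. The key step is then the pointwise estimate $x^TA(p)x=x^TA(p')x+(p_i-\unum{p}_i)\,x^TA^{(i)}x>0$ for every $0\not=x\in\R^n$, where the first summand is strictly positive and the second is nonnegative.

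Part (2) I would handle symmetrically. If $A^{(i)}$ is negative semidefinite, then together with $p_i-\onum{p}_i\leq0$ the matrix $A^{(i)}(p_i-\onum{p}_i)$ is positive semidefinite, so writing $A(p)=A(p'')+A^{(i)}(p_i-\onum{p}_i)$ with $p''$ obtained from $p$ by fixing its $i$-th coordinate at $\onum{p}_i$ yields the same conclusion.

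I do not expect a genuine obstacle. The one point requiring care, and the sole place the argument departs from the semidefinite case, is to confirm that the extra term produced by moving $p_i$ away from its fixed endpoint is \emph{positive semidefinite} rather than merely of a fixed sign, so that adding it preserves \emph{strict} positive definiteness. Everything else is identical bookkeeping to Theorem~\ref{thmPsdParSuf}; and, invoking Theorem~\ref{thmPdPar}, one may phrase the reduction directly at the level of the vertex matrices, cutting the number of definiteness tests from $2^K$ to $2^{K-1}$.
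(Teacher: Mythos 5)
Your proof is correct and is exactly the argument the paper intends: the paper's proof reads only \quo{Analogous to Theorem~\ref{thmPsdParSuf}}, and your decomposition $A(p)=A(p')+A^{(i)}(p_i-\unum{p}_i)$ with the observation that a positive definite plus a positive semidefinite matrix is positive definite is precisely that analogue, with the strictness issue handled correctly.
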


\begin{proof}
Analogous to Theorem~\ref{thmPsdParSuf}.
\end{proof}

\begin{theorem}\label{thmPdSuff}
For each $k=1,\dots,K$, let $A^{(k)}=A^{(k)}_1-A^{(k)}_2$, where both $A^{(k)}_1,A^{(k)}_2$ are positive semidefinite. Then $A(p)$, $p\in\ivr{p}$, is strongly positive definite if 
$$
\sum_{k=1}^K\left(A^{(k)}_1\unum{p}_k-A^{(k)}_2\onum{p}_k\right)
$$
is positive definite.
\end{theorem}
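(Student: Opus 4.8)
The plan is to mimic the proof of Theorem~\ref{thmPsdSuff} almost verbatim, exploiting the single additional fact that the sum of a positive definite matrix and a positive semidefinite matrix is again positive definite.

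First I would fix an arbitrary $p\in\ivr{p}$ and rewrite $A(p)$ by means of the given splitting $A^{(k)}=A^{(k)}_1-A^{(k)}_2$, then regroup it into the hypothesized term plus a correction:
\begin{align*}
A(p)
&=\sum_{k=1}^K\left(A^{(k)}_1 p_k-A^{(k)}_2 p_k\right)\\
&=\sum_{k=1}^K\left(A^{(k)}_1\unum{p}_k-A^{(k)}_2\onum{p}_k\right)
 +\sum_{k=1}^K\left(A^{(k)}_1(p_k-\unum{p}_k)+A^{(k)}_2(\onum{p}_k-p_k)\right).
\end{align*}
The first summand is positive definite by assumption. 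For the second summand, I would observe that $p\in\ivr{p}$ forces $p_k-\unum{p}_k\geq0$ and $\onum{p}_k-p_k\geq0$ for every $k$; hence it is a sum of nonnegative multiples of the positive semidefinite matrices $A^{(k)}_1,A^{(k)}_2$, and is therefore positive semidefinite.

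Adding a positive semidefinite matrix to a positive definite one yields a positive definite matrix, so $A(p)$ is positive definite; since $p\in\ivr{p}$ was arbitrary, $A(p)$ is strongly positive definite. There is no real obstacle here: the only point where the argument departs from the semidefinite case is this final combination step, where one must invoke that positive definiteness is preserved under addition of a positive semidefinite matrix, rather than merely that positive semidefiniteness is closed under addition. This is immediate, since $x^T(P+S)x=x^TPx+x^TSx>0$ whenever $P$ is positive definite, $S$ is positive semidefinite, and $x\not=0$.
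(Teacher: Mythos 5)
Your proof is correct and follows exactly the route the paper intends: the paper's proof of Theorem~\ref{thmPdSuff} simply says \quo{Analogous to Theorem~\ref{thmPsdSuff}}, and your argument is precisely that analogue, using the same splitting-and-regrouping decomposition and correctly identifying the one place the semidefinite argument must be strengthened (positive definite plus positive semidefinite is positive definite).
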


\begin{proof}
Analogous to Theorem~\ref{thmPsdSuff}.
\end{proof}

A parametric matrix $A(p)$, $p\in\ivr{p}$, is called \emph{regular} if $A(p)$ is nonsingular for each $p\in\ivr{p}$.
Regularity of parametric matrices was investigated by Popova \cite{Pop2004b}, for instance.
We have the following relation to regularity, extending item \nref{thmPDint4} of  Theorem~\ref{thmPDint}.

\begin{theorem}\label{thmPdReg}
The parametric matrix $A(p)$, $p\in\ivr{p}$, is strongly positive definite if and only if the following two properties hold:
\begin{enumerate}[(1)]
\item
$A(p)$ is  positive definite for an arbitrarily chosen $p\in\ivr{p}$,
\item
$A(p)$, $p\in\ivr{p}$, is regular.
\end{enumerate}
\end{theorem}

\begin{proof}\mbox{}

\quo{$\Rightarrow$}
Obvious as each positive definite matrix is nonsingular.

\quo{$\Leftarrow$} 
Let $A(p^1)$ be positive definite for $p^1\in\ivr{p}$ and suppose to the contrary that  $A(p^2)$ is not positive definite for $p^2\in\ivr{p}$. Hence $A(p^1)$ has positive eigenvalues, and $A(p^2)$ has at least one non-positive eigenvalue. Due to continuity of eigenvalues \cite{Mey2000} and compactness of $\ivr{p}$, there is $p^0\in\ivr{p}$ such that $A(p^0)$ is singular. A contradiction.
\end{proof}

Now, we have two sufficient conditions for checking strong positive definiteness. The first one is stated in Theorem~\ref{thmPdSuff}, and the second one utilizes regularity according to Theorem~\ref{thmPdReg}. By Poljak and Rohn \cite{PolRoh1993} (see also \cite{Fie2006,KreLak1998}), checking regularity is a co-NP-hard problem even for standard interval matrices. However, there are some polynomially verifiable sufficient conditions that we can apply. We will utilize a similar approach to \cite{Pop2004b}: preconditioning, relaxation and call of a Beeck criterion for checking regularity of an interval matrix (see Rex and Rohn \cite{RexRoh1998}).
By interval aritmetic, we evaluate the interval matrix
$$
\imace{M}:=\sum_{k=1}^K\left(CA^{(k)}\right)\inum{p}_k,
$$
where $C=A(\Mid{p})^{-1}$ is the preconditioner.
Now, the Beeck sufficient condition for regularity of $\imace{M}$ (which implies regularity of $A(p)$, $p\in\ivr{p}$) reads 
\begin{align}\label{regCond}
\rho(\Rad{M})<1.
\end{align}

We show by examples that no one condition for checking strong positive definiteness is stronger than the other one, where the criterion \nref{regCond} is utilized for regularity checking. Notice that the values in the matrices below are displayed to a precision of four digits, however, the real computation was done rigorously in Matlab using the interval library Intlab by Rump \cite{Rum1999} and the verification software package Versoft by Rohn \cite{versoft}.

\begin{example}
Let 
$$
A(p)=
 \begin{pmatrix}1.5&0\\0&1.1\end{pmatrix}p_1+
 \begin{pmatrix}-1&1\\1&1\end{pmatrix}p_2
,\quad p\in\inum{p}=(1,[0,1]).
$$
This parametric matrix is strongly positive definite.

Let us check the sufficient condition by Theorem~\ref{thmPdSuff}. The matrix $A^{(1)}$ is positive definite, so we split only
$$
A^{(2)}=A^{(2)}_1-A^{(2)}_2
=\begin{pmatrix}0.2071 & 0.5\\0.5 & 1.2071\end{pmatrix}
-\begin{pmatrix}1.2071 & -0.5\\-0.5 & 0.2071\end{pmatrix}
$$
Now,
$$
A^{(1)}\cdot 1+A^{(2)}_1\cdot 0-A^{(2)}_2\cdot 1
=\begin{pmatrix}0.2929 & 0.5\\0.5 & 0.8929\end{pmatrix}
$$
is positive definite, proving strong positive definiteness of  $A(p)$, $p\in\ivr{p}$.

In comparison, the sufficient regularity condition \nref{regCond} fails to prove regularity. Using the preconditioner 
$$
C:=A(\Mid{p})^{-1}
=\begin{pmatrix}
1 & 0.5\\0.5 & 1.6
\end{pmatrix}^{-1},
$$
the relaxation leads to an interval matrix
$$
\imace{M}=\sum_{k=1}^K\left(CA^{(k)}\right)\inum{p}_k
=\begin{pmatrix}
[  0.2222,  1.7778] & [-0.4075, 0.4075]\\{} 
[ -0.5556,  0.5556] & [ 0.8148, 1.1852] 
\end{pmatrix},
$$
which is not confirmed to be regular by using the condition \nref{regCond} as $\rho(\Rad{M})= 1.0419\not<1$. 
\end{example}

\begin{example}
Let 
$$
A(p)=
 \begin{pmatrix}3.3&0.25\\0.25&3.3\end{pmatrix}p_1+
 \begin{pmatrix}1&2\\2&0\end{pmatrix}p_2+
 \begin{pmatrix}0&2\\2&1\end{pmatrix}p_3
,\quad p\in\inum{p}=(1,[0,1],[0,1]).
$$
In this example, Theorem~\ref{thmPdSuff} fails to prove positive definiteness. In contrast, with the preconditioner 
$$
C:=A(\Mid{p})^{-1}
=\begin{pmatrix}
3.8 & 2.25\\2.25 & 3.8
\end{pmatrix}^{-1},
$$
and the relaxation matrix
$$
\imace{M}=\sum_{k=1}^K\left(CA^{(k)}\right)\inum{p}_k
=\begin{pmatrix} 
[0.7227, 1.2773]  & [ -0.6905, 0.6905]\\{} 
[-0.6905,  0.6905]& [  0.7227, 1.2773] 
\end{pmatrix},
$$
the condition \nref{regCond} proves positive definiteness by showing $\rho(\Rad{M})=0.9678<1$. 
\end{example}

Theorem~\ref{thmWeakPsdNec} is modified to necessary condition for weak positive definiteness as follows.

\begin{theorem}\label{thmWeakPdNec}
For each $k=1,\dots,K$, let $A^{(k)}=A^{(k)}_1-A^{(k)}_2$, where both $A^{(k)}_1,A^{(k)}_2$ are positive semidefinite. If $A(p)$, $p\in\ivr{p}$, is weakly positive definite, then
$$
\sum_{k=1}^K\left(A^{(k)}_1\onum{p}_k-A^{(k)}_2\unum{p}_k\right)
$$
is positive definite.
\end{theorem}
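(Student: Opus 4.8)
The plan is to mimic the proof of Theorem~\ref{thmWeakPsdNec} almost verbatim, replacing the closure property \emph{positive semidefinite $+$ positive semidefinite $=$ positive semidefinite} by the stronger fact that \emph{positive definite $+$ positive semidefinite $=$ positive definite}. First I would invoke the hypothesis of weak positive definiteness to fix a parameter vector $p\in\ivr{p}$ for which $A(p)$ is positive definite. Then I would rewrite the target matrix using the same algebraic identity as in the semidefinite case, namely
\begin{align*}
\sum_{k=1}^K\left(A^{(k)}_1\onum{p}_k-A^{(k)}_2\unum{p}_k\right)
=A(p)+\sum_{k=1}^K\left(A^{(k)}_1(\onum{p}_k-p_k)+A^{(k)}_2(p_k-\unum{p}_k)\right),
\end{align*}
which follows by substituting $A^{(k)}=A^{(k)}_1-A^{(k)}_2$ into $A(p)=\sum_k A^{(k)}p_k$ and collecting the $A^{(k)}_1$ and $A^{(k)}_2$ contributions separately.

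Next I would argue that the correction term on the right-hand side is positive semidefinite. Each scalar factor $\onum{p}_k-p_k$ and $p_k-\unum{p}_k$ is nonnegative because $p\in\ivr{p}$, and the matrices $A^{(k)}_1,A^{(k)}_2$ are positive semidefinite by assumption; hence each summand is a nonnegative multiple of a positive semidefinite matrix, and their sum inherits positive semidefiniteness. Finally I would add the positive definite matrix $A(p)$ to this positive semidefinite correction term: for any $0\ne x$ one has $x^TA(p)x>0$ while the correction term contributes a nonnegative quadratic form, so the sum is strictly positive, giving positive definiteness of the whole expression.

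The only point that genuinely differs from Theorem~\ref{thmWeakPsdNec}, and hence the place to be careful, is this last step: I must use the strict closure property that a positive definite matrix plus a positive semidefinite matrix is positive definite, rather than merely the semidefinite closure that sufficed before. There is no real analytic obstacle here; the argument is a one-line strengthening of the semidefinite case, and the bookkeeping of the identity above is identical to that already carried out in the proof of Theorem~\ref{thmWeakPsdNec}.
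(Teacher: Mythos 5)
Your proof is correct and is exactly the argument the paper intends: its proof of this theorem is simply \quo{Analogous to Theorem~\ref{thmWeakPsdNec}}, and your write-up carries out precisely that analogy — the same algebraic identity, the same nonnegativity of the factors $\onum{p}_k-p_k$ and $p_k-\unum{p}_k$, with the semidefinite closure replaced by the fact that a positive definite matrix plus a positive semidefinite one is positive definite. Nothing is missing.
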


\begin{proof}
Analogous to Theorem~\ref{thmWeakPsdNec}.
\end{proof}

\section{Example}

Consider a class of functions
\begin{align*}
f(x)=\sum_{\ell=1}^Lc_{\ell}x_{i_{\ell}}x_{j_{\ell}}x_{k_{\ell}},
\end{align*}
where $i_{\ell},j_{\ell},k_{\ell}\in\sseznam{0}{n}$ are not necessarily mutually different, and $x_0=1$. For such functions, their Hessian matrix has directly a linear parametric form without using any kind of linearization. It is easy to see that each entry of the Hessian of $f(x)$ is a linear function with respect to $x\in\R^n$. Thus the variables $x$ play the role of the parameters $p$, and their domain $\ivr{x}$ works as $\ivr{p}$.

\begin{example}
Let
$$
f(x,y,z)=x^3+2x^2y-xyz+3yz^2+5y^2,
$$
and we want to check its convexity on  $x\in\inum{x}=[2,3]$, $y\in\inum{y}=[1,2]$ and $z\in\inum{z}=[0,1]$. The Hessian of $f$ reads
$$
\nabla^2f(x,y,z)=
\begin{pmatrix}
6x + 4y & 4x - z & -y \\
4x - z & 10  & -x + 6 z\\
-y & -x + 6z & 6 y 
\end{pmatrix}
$$
The direct evaluation the Hessian matrix by interval arithmetic results in an enclose by the interval matrix
$$
\nabla^2f(\inum{x},\inum{y},\inum{z})\subseteq
\begin{pmatrix}
[16,26] & [7,12] & -[1,2]\\{}
[7,12]  & 10 & [-3,4]\\{}
-[1,2]  & [-3,4] & [6,12]
\end{pmatrix}
$$
This interval matrix is not strongly positive semidefinite since the smallest eigenvalue, computed by the exponential formula by Hertz \cite{Her1992}, is $-2.8950$. Nevertheless, Theorem~\ref{thmPdSuff} proves $\nabla^2f(x,y,z)$ to be positive definite by utilizing the parametric form
$$
\nabla^2f(x,y,z)=
\begin{pmatrix}6& 4& 0\\ 4& 0& -1\\ 0& -1& 0\end{pmatrix}x+
\begin{pmatrix}4& 0& -1\\ 0& 0& 0\\ -1& 0& 6\end{pmatrix}y+
\begin{pmatrix}0& -1& 0\\ -1& 0& 6\\ 0& 6& 0\end{pmatrix}z+
\begin{pmatrix}0& 0& 0\\0& 10& 0\\0& 0& 0\end{pmatrix}.
$$
Thus, we can conclude that $f$ is convex on the interval domain.
\end{example}

\subsubsection*{Acknowledgments.} 

The author was supported by the Czech Science Foundation Grant P402-13-10660S.


\bibliographystyle{abbrv}
\bibliography{coprod_psdpar_lncs}

\begin{thebibliography}{10}

\bibitem{BiaGar1984}
S.~Bia{\l}as and J.~Garloff.
\newblock Intervals of {P}-matrices and related matrices.
\newblock {\em Linear Algebra Appl.}, 58:33--41, 1984.

\bibitem{Fie2006}
M.~Fiedler, J.~Nedoma, J.~Ram\'{\i}k, J.~Rohn, and K.~Zimmermann.
\newblock {\em Linear Optimization Problems with Inexact Data}.
\newblock Springer, New York, 2006.

\bibitem{Flo2000}
C.~A. Floudas.
\newblock {\em Deterministic Global Optimization. Theory, Methods and
  Applications}, volume~37 of {\em Nonconvex Optimization and its
  Applications}.
\newblock Kluwer, Dordrecht, 2000.

\bibitem{FloGou2009}
C.~A. Floudas and C.~E. Gounaris.
\newblock A review of recent advances in global optimization.
\newblock {\em J. Glob. Optim.}, 45(1):3--38, 2009.

\bibitem{FloPar2009}
C.~A. Floudas and P.~M. Pardalos, editors.
\newblock {\em Encyclopedia of Optimization. 2nd ed.}
\newblock Springer, New York, 2009.

\bibitem{GarMat2012}
B.~G{\"{a}}rtner and J.~Matou{\v{s}}ek.
\newblock {\em Approximation Algorithms and Semidefinite Programming}.
\newblock Springer, Berlin Heidelberg, 2012.

\bibitem{HanWal2004}
E.~R. Hansen and G.~W. Walster.
\newblock {\em Global Optimization Using Interval Analysis}.
\newblock Marcel Dekker, New York, second edition, 2004.

\bibitem{HenTot2010}
E.~M.~T. Hendrix and B.~Gazdag-T{\'o}th.
\newblock {\em Introduction to nonlinear and global optimization}, volume~37 of
  {\em Optimization and Its Applications}.
\newblock Springer, New York, 2010.

\bibitem{Her1992}
D.~Hertz.
\newblock The extreme eigenvalues and stability of real symmetric interval
  matrices.
\newblock {\em IEEE Trans. Autom. Control}, 37(4):532--535, 1992.

\bibitem{Hla2012d}
M.~Hlad{\'\i}k.
\newblock Enclosures for the solution set of parametric interval linear
  systems.
\newblock {\em Int. J. Appl. Math. Comput. Sci.}, 22(3):561--574, 2012.

\bibitem{Hla2013a}
M.~Hlad\'{\i}k.
\newblock Bounds on eigenvalues of real and complex interval matrices.
\newblock {\em Appl. Math. Comput.}, 219(10):5584--5591, 2013.

\bibitem{Hla2013ca}
M.~Hlad\'{\i}k.
\newblock On the efficient {Gerschgorin} inclusion usage in the global
  optimization {$\alpha$BB} method.
\newblock {\em J. Glob. Optim.}, 2014.
\newblock DOI 10.1007/s10898-014-0161-7.

\bibitem{HlaDan2010}
M.~Hlad\'{\i}k, D.~Daney, and E.~Tsigaridas.
\newblock Bounds on real eigenvalues and singular values of interval matrices.
\newblock {\em SIAM J. Matrix Anal. Appl.}, 31(4):2116--2129, 2010.

\bibitem{JauHen2005}
L.~Jaulin and D.~Henrion.
\newblock Contracting optimally an interval matrix without loosing any positive
  semi-definite matrix is a tractable problem.
\newblock {\em Reliab. Comput.}, 11(1):1--17, 2005.

\bibitem{Kea1996}
R.~B. Kearfott.
\newblock {\em Rigorous Global Search: Continuous Problems}.
\newblock Kluwer, Dordrecht, 1996.

\bibitem{Kol2006}
L.~V. Kolev.
\newblock Outer interval solution of the eigenvalue problem under general form
  parametric dependencies.
\newblock {\em Reliab. Comput.}, 12(2):121--140, 2006.

\bibitem{Kol2007}
L.~V. Kolev.
\newblock Determining the positive definiteness margin of interval matrices.
\newblock {\em Reliab. Comput.}, 13(6):445--466, 2007.

\bibitem{Kol2010}
L.~V. Kolev.
\newblock Eigenvalue range determination for interval and parametric matrices.
\newblock {\em Int. J. Circuit Theory Appl.}, 38(10):1027--1061, 2010.

\bibitem{KreLak1998}
V.~Kreinovich, A.~Lakeyev, J.~Rohn, and P.~Kahl.
\newblock {\em Computational Complexity and Feasibility of Data Processing and
  Interval Computations}.
\newblock Kluwer, 1998.

\bibitem{Len2014}
H.~Leng.
\newblock Real eigenvalue bounds of standard and generalized real interval
  eigenvalue problems.
\newblock {\em Appl. Math. Comput.}, 232:164--171, 2014.

\bibitem{Liu2009}
W.~Liu.
\newblock Necessary and sufficient conditions for the positive definiteness and
  stability of symmetric interval matrices.
\newblock In {\em Proceedings of the 21st Annual International Conference on
  Chinese Control and Decision Conference}, CCDC'09, pages 4574--4579,
  Piscataway, NJ, USA, 2009. IEEE Press.

\bibitem{MatPas2012}
M.-H. Matcovschi, O.~Pastravanu, and M.~Voicu.
\newblock Right bounds for eigenvalue ranges of interval matrices - estimation
  principles vs global optimization.
\newblock {\em Control Eng. Appl. Inform.}, 14(1):3--13, 2012.

\bibitem{May2012}
G.~Mayer.
\newblock An {Oettli--Prager}-like theorem for the symmetric solution set and
  for related solution sets.
\newblock {\em SIAM J. Matrix Anal. Appl.}, 33(3):979--999, 2012.

\bibitem{Mey2000}
C.~D. Meyer.
\newblock {\em {Matrix Analysis and Applied Linear Algebra}}.
\newblock {SIAM}, {Philadelphia}, 2000.

\bibitem{Mon2011}
M.~M{\"o}nnigmann.
\newblock Fast calculation of spectral bounds for hessian matrices on
  hyperrectangles.
\newblock {\em SIAM J. Matrix Anal. Appl.}, 32(4):1351--1366, 2011.

\bibitem{NesNem1994}
Y.~Nesterov and A.~Nemirovskii.
\newblock {\em Interior-Point Polynomial Algorithms in Convex Programming}.
\newblock SIAM, Philadelphia, 1994.

\bibitem{Neu2004}
A.~Neumaier.
\newblock {Complete search in continuous global optimization and constraint
  satisfaction}.
\newblock {\em Acta Numer.}, 13:271--369, 2004.

\bibitem{PolRoh1993}
S.~Poljak and J.~Rohn.
\newblock Checking robust nonsingularity is {NP}-hard.
\newblock {\em Math. Control Signals Syst.}, 6(1):1--9, 1993.

\bibitem{Pop2004b}
E.~D. Popova.
\newblock Strong regularity of parametric interval matrices.
\newblock In I.~Dimovski~et al., editor, {\em Mathematics and Education in
  Mathematics, Proceedings of the 33rd Spring Conference of the Union of
  Bulgarian Mathematicians, Borovets, Bulgaria}, pages 446--451. BAS, 2004.

\bibitem{Pop2012}
E.~D. Popova.
\newblock Explicit description of {AE} solution sets for parametric linear
  systems.
\newblock {\em SIAM J. Matrix Anal. Appl.}, 33(4):1172--1189, 2012.

\bibitem{PopHla2013}
E.~D. Popova and M.~Hlad{\'\i}k.
\newblock Outer enclosures to the parametric {AE} solution set.
\newblock {\em Soft Comput.}, 17(8):1403--1414, 2013.

\bibitem{RexRoh1998}
G.~Rex and J.~Rohn.
\newblock Sufficient conditions for regularity and singularity of interval
  matrices.
\newblock {\em SIAM J. Matrix Anal. Appl.}, 20(2):437--445, 1998.

\bibitem{Roh1994b}
J.~Rohn.
\newblock Positive definiteness and stability of interval matrices.
\newblock {\em SIAM J. Matrix Anal. Appl.}, 15(1):175--184, 1994.

\bibitem{versoft}
J.~Rohn.
\newblock {VERSOFT: Verification software in MATLAB / INTLAB, version 10},
  2009.

\bibitem{Roh2012a}
J.~Rohn.
\newblock A handbook of results on interval linear problems.
\newblock Technical Report 1163, Institute of Computer Science, Academy of
  Sciences of the Cz ech Republic, Prague, 2012.

\bibitem{Roh2012b}
J.~Rohn.
\newblock A manual of results on interval linear problems.
\newblock Technical Report 1164, Institute of Computer Science, Academy of
  Sciences of the Czech Republic, Prague, 2012.

\bibitem{Rum1999}
S.~M. Rump.
\newblock {INTLAB -- INTerval LABoratory}.
\newblock In T.~Csendes, editor, {\em {Developments~in~Reliable Computing}},
  pages 77--104. Kluwer Academic Publishers, Dordrecht, 1999.

\bibitem{ShaHou2010}
J.~Shao and X.~Hou.
\newblock Positive definiteness of {Hermitian} interval matrices.
\newblock {\em Linear Algebra Appl.}, 432(4):970--979, 2010.

\bibitem{SkjWes2014}
A.~Skj{\"{a}}l and T.~Westerlund.
\newblock New methods for calculating {$\alpha BB$}-type underestimators.
\newblock {\em J. Glob. Optim.}, 58(3):411--427, 2014.

\bibitem{VanBoy1996}
L.~Vandenberghe and S.~Boyd.
\newblock Semidefinite programming.
\newblock {\em SIAM Rev.}, 38(1):49--95, 1996.

\bibitem{ZimKra2012}
M.~Zimmer, W.~Kr{\"a}mer, and E.~D. Popova.
\newblock Solvers for the verified solution of parametric linear systems.
\newblock {\em Comput.}, 94(2-4):109--123, 2012.

\end{thebibliography}

\end{document}